\newcommand\reallytiny{\@setfontsize\reallytiny{5}{6}}
\newcommand{\unit}{\text{\textbf{1}}}
\renewcommand{\Vec}{\text{Vec}}
\newcommand{\cC}{{\mathcal C}}
\newcommand{\FPdim}{{\operatorname{FPdim}}}
\newcommand{\Z}{{\mathbb Z}}
\newcommand{\id}{\operatorname{id}}
\newcommand{\cB}{\mathcal{B}}
\newcommand{\Id}{\operatorname{Id}}
\theoremstyle{plain}
\numberwithin{equation}{section}
\newtheorem{theorem}{Theorem}[section]
\newtheorem{corollary}[theorem]{Corollary}
\newtheorem{proposition}[theorem]{Proposition}
\newcommand{\xdownarrow}[1]{%
  {\left\downarrow\vbox to #1{}\right.\kern-\nulldelimiterspace}
}
\theoremstyle{definition}
\newtheorem{definition}[theorem]{Definition}
\newtheorem{example}[theorem]{Example}
\theoremstyle{remark}
\newtheorem{remark}[theorem]{Remark}
\author[C. Galindo]{C\'esar Galindo}
\address{ Departamento de Matem\'aticas, Universidad e los Andes, Bogot\'a, Colombia}
\email{cn.galindo1116@uniandes.edu.co}
\author[J. Plavnik]{Julia Plavnik}
\address{Department of Mathematics, Indiana University}
\email{jplavnik@iu.edu}
\author[E. Rowell]{Eric C. Rowell}
\address{ Department of Mathematics, Texas A\&M University, College Station, TX, USA}
\email{rowell@tamu.edu}
\begin{document}

\title[Integral non-group-theoretical  modular categories of dimension $p^2q^2$]{Integral non-group-theoretical modular categories of dimension $p^2q^2$}

\thanks{C.G. was partially supported by Grant INV-2023-162-2830 from the School of Science of Universidad de los Andes. J.P. was partially supported by US NSF grant DMS-2146392 and by  Simons Foundation Award 889000 as part of the Simons Collaboration on Global Categorical Symmetries. E.C.R. was partially supported by US NSF grant DMS-2205962.}

\begin{abstract}
We construct all integral non-group-theoretical modular categories of dimension \(p^2q^2\), where \(p\) and \(q\) are distinct prime numbers, establishing that a necessary and sufficient condition for their existence is that \(p \mid q+1\), and their rank is \(p^2 + \frac{q^2 - 1}{p}\).
\end{abstract}

\subjclass[2000]{16W30, 18D10, 19D23}

\date{\today}
\maketitle

\section{Introduction}
While integral non-group-theoretical modular categories of dimension $4q^2$ with $q$ an odd prime were constructed in \cite{GNN}, a sign error in \cite{AIM2012} led to the mistaken conclusion that for odd primes $p,q$ there were no non-group-theoretical categories of dimension $p^2q^2$.  Recently this error was pointed out to us by Palcoux, with a potential rank 17 counterexample for $p=3$, $q=5$ described in  \cite{Palcouxetc}.  

We correct this oversight here by explicitly constructing all non-group theoretical modular categories of dimension $p^2q^2$ for $q$ an odd prime with $p$ a prime dividing $q+1$.  The case where $p$ divides $q-1$ was already handled in \cite{AIM2012}, but here we deal with all cases simultaneously. Therefore, our construction covers all integral non-group-theoretical modular categories of dimension $p^2q^2$ including dimension \(4q^2\) for any odd \(q\), and \(\mathcal{C}(\mathfrak{sl}_2, q, 6)\) and their zestings, \cite{zesting}.

The paper is organized as follows. In Section \ref{sec: preliminaries}, we recall some basic definitions necessary for the general construction of modular categories associated with faithful actions of cyclic groups on metric groups. In Section \ref{sec: definition of modular cat}, for each quadratic extension of a finite field, we construct an anisotropic metric group whose  orthogonal group is a dihedral group. With this we define $\left(\operatorname{Vec}_{(\mathbb{F}_{q^2}, N)}^{p, \alpha}\right)^{\mathbb{Z}/p\mathbb{Z}}$ the family of integral non-group-theoretical modular categories of dimension $p^2q^2$. In Section \ref{sec: propiedades}, we parameterize the simple objects of $\left(\operatorname{Vec}_{(\mathbb{F}_{q^2}, N)}^{p, \alpha}\right)^{\mathbb{Z}/p\mathbb{Z}}$ and prove that it is non-group-theoretical. Finally, in Section \ref{sec: group-theoretical}, we show that group-theoretical modular categories of dimension $p^2q^2$ are either the representations of a twisted Drinfeld double of a non-abelian group of order $pq$ or pointed, providing a complete description of them.

\section{Preliminaries}\label{sec: preliminaries}

By a \emph{fusion category}, we mean a $\mathbb{C}$-linear rigid semisimple tensor category with finitely many isomorphism classes of simple objects and simple unit object $\mathbf{1}$. For basic definitions, including those of braided fusion categories, modular categories, and the 2-category of (bi)module categories over a fusion category, along with their properties and examples, we refer the reader to \cite{EGNO}.

An object $X$ in a fusion category is called \emph{invertible} if $X \otimes X^* \cong X^* \otimes X \cong \mathbf{1}$. A fusion category $\mathcal{B}$ is called \emph{pointed} if every simple object is invertible. Up to tensor equivalence, every pointed fusion category is equivalent to $\operatorname{Vec}_G^{\omega}$, the category of finite-dimensional $G$-graded vector spaces, where $G$ is a finite group and the associativity constraint is given by a $3$-cocycle $\omega \in Z^3(G, \mathbb{K})$. A fusion category $\mathcal{C}$ is called \emph{group-theoretical} if it is Morita equivalent to a pointed fusion category, this means if there exists a $\mathcal{C}$-module category $\mathcal{M}$ such that $\operatorname{End}_{\mathcal{C}}(\mathcal{M})$ is a pointed fusion category, see \cite{ENO}.

Given a braided fusion category $\mathcal{D}$ and a braided inclusion $\operatorname{Rep}(G) \hookrightarrow \mathcal{D}$, we denote by $\mathcal{D}_G$ the braided $G$-crossed category constructed through \emph{de-equivariantization}, as described in \cite{DGNO}. Additionally, $G$ acts on $\mathcal{B} := (\mathcal{D}_G)_e$—the trivial component of the associated $G$-crossed braided  category—via braided tensor autoequivalences. This action establishes a group homomorphism $G \to \operatorname{Aut}_{\otimes}^{\operatorname{br}}(\mathcal{B})$.

Conversely, if we start with a non-degenerate braided fusion category $\mathcal{B}$ and a group homomorphism $G \to \operatorname{Aut}_{\otimes}^{\operatorname{br}}(\mathcal{B})$, the method known as \emph{gauging} (see \cite{CGPW}, \cite{BBCW}) enables us to build a braided $G$-crossed category called $\mathcal{B}^{\times, G}$. Following this, a non-degenerate braided category $\mathcal{D}$, obtained through the $G$-equivariantization of $\mathcal{B}^{\times, G}$ contains $\operatorname{Rep}(G)$.
 The categories $\mathcal{B}^{\times, G}$ are classified by pairs $(M, \alpha)$, where $M$ and $\alpha$ are elements of torsors over $H^2(G,\operatorname{Inv}(\mathcal{B}))$ and $H^3(G, \mathbb{C}^\times)$, respectively. This classification relies on the vanishing of some cohomological obstructions, $o_3(\rho) \in H^3(G,\operatorname{Inv}(\mathcal{B}))$ and $o_4(\rho, M) \in H^4(G, \mathbb{C}^\times)$ \cite{ENO3}.

When $G = \mathbb{Z}/n\mathbb{Z}$ with $n$ coprime to the size of $\operatorname{Inv}(\mathcal{B})$, the corresponding non-degenerate modular categories are uniquely characterized by $H^3(\mathbb{Z}/n\mathbb{Z}, \mathbb{C}^\times) \cong \mathbb{Z}/n\mathbb{Z}$. Their existence is guaranteed as $H^n(\mathbb{Z}/n\mathbb{Z},\operatorname{Inv}(\mathcal{C})) = 0$ and $H^4(\mathbb{Z}/n\mathbb{Z},\mathbb{C}^\times) = 0$.

Based on the preceding discussion and in order to establish notation, we introduce the following definition, which precisely corresponds to the type of modular categories we are interested in constructing.

\begin{definition}\label{def: construction}
Let $\mathcal{B}$ be a non-degenerate braided fusion category. For a cyclic subgroup $\langle T \rangle\subseteq \operatorname{Aut}_{\otimes}^{\operatorname{br}}(\mathcal{B})$ of order $n$, coprime to $|\operatorname{Inv}(\mathcal{B})|$, we denote by $\mathcal{B}^{(\langle T \rangle,\alpha)}$ the corresponding braided $\mathbb{Z}/n\mathbb{Z}$-crossed extension of $\cB$, where $\alpha$ is an element of $H^3(\mathbb{Z}/n\mathbb{Z},\mathbb C^\times)\cong \mathbb{Z}/n\mathbb{Z}$. Additionally, we denote by
$(\mathcal{B}^{(\langle T \rangle,\alpha)})^{\mathbb{Z}/n\mathbb{Z}}$  the associated non-degenerate braided fusion category.
\end{definition}

Recall a metric group is a pair $(A, t)$, where $A$ is a finite abelian group and $t: A \to \mathbb{C}^*$ is a non-degenerate quadratic form. This means that the map defined by $(a, b) \mapsto \frac{t(a + b)}{t(a)t(b)}$ is a non-degenerate bicharacter and $t(a) = t(-a)$. A pointed modular category $\mathcal{B}$ gives rise to a metric group by taking $A = \operatorname{Inv}(\mathcal{B})$, the set of isomorphism classes of invertible objects, and $t: A \to \mathbb{C}^*$ given by $c_{a, a} = t(a) \operatorname{id}_{a \otimes a}$. Conversely, every metric group has an associated pointed modular category (see \cite{DGNO} for details).

A key example of a metric group, and consequently of a pointed modular category, is constructed as follows: let \(V\) be a finite-dimensional vector space over a finite field \(\mathbb{F}\) of characteristic \(p\), and let \(Q: V \to \mathbb{F}\) be an ordinary non-degenerate quadratic form. We then define the metric group \((V, t)\) where \(t(v) = e^{\frac{2 \pi i N(Q(v))}{p}}\), with \(N:\mathbf{F}_p\to \mathbf{F}_p\) being the field norm. This construction provides an associated metric group and, consequently, a pointed modular category.

The group of braided tensor autoequivalences of a pointed modular category with associated metric group \((A, t)\) is naturally isomorphic to \(\operatorname{Aut}(A, t)\), the group of automorphisms of \(A\) that fix \(t\). In the case of a non-degenerate quadratic \(\mathbb{F}\)-linear space \((V,Q)\), it holds that \(O(V,Q) \subset \operatorname{Aut}(V,t)\). Therefore, given a cyclic subgroup of \(O(V,Q)\) of order relatively prime to \(p\), we can construct a modular category via gauging, as in Definition \ref{def: construction}. This method constructs the family of non-group-theoretical modular categories of dimension \(p^2q^2\).

\section{Definition of Modular Categories $\left(\operatorname{Vec}_{(\mathbb{F}_{q^2}, N)}^{p, \alpha}\right)^{\mathbb{Z}/p\mathbb Z}$}\label{sec: definition of modular cat}

Let \(q\) be a prime and let \(\mathbf{F}_{q} \subset \mathbf{F}_{q^{2}}\) denote a finite Galois extension, where \(\mathbf{F}_{q}\) and \(\mathbf{F}_{q^{2}}\) are fields with \(q\) and \(q^{2}\) elements, respectively. We denote the generator of the Galois group by $\sigma: \mathbf{F}_{q^{2}} \to \mathbf{F}_{q^{2}}$, so $\sigma (v) = v^q$, for $v\in \mathbf{F}_{q^{2}}$. The norm \(N: \mathbf{F}_{q^{n+1}} \to \mathbf{F}_q\) is defined by 
\[N(v) = v\sigma(v) = v^{q+1},\]
which establishes an anisotropic plane, that is a 2-dimensional \(\mathbf{F}_{q}\)-vector space equipped with the quadratic form \(N\) satisfying \(N(v) = 0\) if and only if \(v = 0\).
We denote by $\mathbf{F}_{q^{2}}^* = \operatorname{Hom}_{\mathbf{F}_{q}}(\mathbf{F}_{q^{2}}, \mathbf{F}_{q})$. This norm induces a group epimorphism $N: \mathbf{F}_{q^{2}}^{*} \to \mathbf{F}_{q}^{*}$, with $\ker(N)$ being a cyclic group of order $q^2$.

The assignment
\begin{align*}
    \rho: \ker(N) &\to SO(\mathbf{F}_{q^{2}}, N), \\
    c &\mapsto [\rho_c: v \mapsto cv],
\end{align*}
defines a group isomorphism. Now, since $\sigma \in O(\mathbf{F}_{q^{2}}, N)$ with $\det(\sigma) = -1$ (for $q=2$ Dickson's pseudodetermiant non trivial) and $\sigma^2=\Id$, it follows that $O(\mathbf{F}_{q^{2}}, N) = SO(\mathbf{F}_{q^{2}}, N) \rtimes \langle \sigma \rangle$. Consequently, $O(\mathbf{F}_{q^{2}}, N)$ is isomorphic to the dihedral group of order $2(q + 1)$.

\begin{definition}\label{def: modular}
Let $p$ and $q$ be primes with $p \mid q + 1$. Given $1 \neq c \in \mathbb{F}_{q^2}$ such that $N(c) = 1$ and $c^p = 1$, we define $\operatorname{Vec}_{(\mathbb{F}_{q^2}, N)}^{p, \alpha}$, where $\alpha \in H^3(\mathbb{Z}/p\mathbb Z,\mathbb C^*)\cong \mathbb{Z}/p\mathbb Z$, as the associated braided $\mathbb{Z}/p\mathbb Z$-crossed modular category. We denote by $\left(\operatorname{Vec}_{(\mathbb{F}_{q^2}, N)}^{p, \alpha}\right)^{\mathbb{Z}/p\mathbb Z}$ the corresponding modular category obtained by $\mathbb{Z}/p\mathbb{Z}$-equivariantization.
\end{definition}

\begin{remark}\label{rmk: sobre la cartegoria modular}
\begin{enumerate}
\item As we will see in Proposition~\ref{prop: fusion rules}, the fusion category $\Vec_{(\mathbb{F}_{q^2}, N)}^{p, \alpha}$ is integral. Consequently, its equivariantization, the modular category $\left(\Vec_{(\mathbb{F}_{q^2}, N)}^{p, \alpha}\right)^{\mathbb{Z}/p\mathbb{Z}}$, is also integral. Given that every integral fusion category has a unique spherical structure with quantum dimensions matching the Frobenius-Perron dimensions, this category is indeed modular.

\item Since the orthogonal group of $(\mathbf{F}_{q^2}, N)$ is a dihedral group, every odd cyclic subgroup is completely determined by its order and for order two they correspond to reflections. Hence, $\left(\Vec_{(\mathbb{F}_{q^2}, N)}^{p, \alpha}\right)^{\mathbb{Z}/p\mathbb{Z}}$ does not depend on the choice of $c$ in the case of $p$ odd and for $p=2$ the category $\Vec_{(\mathbb{F}_{q^2}, N)}^{p, \alpha}$ corresponds to a Tambara-Yamagami category.

\item The modular category \(\left(\operatorname{Vec}_{(\mathbb{F}_{q^2}, N)}^{p, \alpha}\right)^{\mathbb{Z}/p\mathbb{Z}}\) is \(\mathbb{Z}/p\mathbb{Z}\)-graded, with  trivial component \(\operatorname{Vec}_{(\mathbf{F}_{q^2}, N)}^{\mathbb{Z}/q\mathbb{Z}}\). The trivial component can be realized simply as the category of representations of the semi-direct product \(\mathbf{F}_{q^2} \rtimes_c \mathbb{Z}/p\mathbb{Z}\), where \(\mathbf{F}_{q^2}\) is considered only as an abelian group and thus \(\mathbf{F}_{q^2} \cong (\mathbb{Z}/q\mathbb{Z})^2\). 
\item The modular category \(\left(\operatorname{Vec}_{(\mathbb{F}_{q^2}, N)}^{p, \alpha}\right)^{\mathbb{Z}/p\mathbb{Z}}\) 
is a minimal modular extension of \(\operatorname{Vec}_{(\mathbf{F}_{q^2}, N)}^{\mathbb{Z}/q\mathbb{Z}}\), and  this minimal modular extensions are unique up to 
twisting by elements in $H^3(\mathbb{Z}/p\mathbb{Z},\mathbb C^\times)\cong \mathbb{Z}/p\mathbb{Z}$, see \cite{LKW}.
\end{enumerate}
\end{remark}

\begin{example}\label{Examples}
\begin{enumerate}
    \item If $q=2$, then $\mathbb{F}_{4}=\mathbb{F}_{2}[\alpha]$, with $\alpha^2+\alpha+1=0$. In this case, $\mathbb{F}_{4}=\langle 1, \alpha \rangle$ as an abelian group and $t(1)=t(\alpha)=-1$. Taking $c=\alpha$, we have that $\rho(c)$ has order three. Consequently, $\operatorname{Vec}_{(\mathbb{F}_{4}, N)}^{\mathbb{Z}/3\mathbb{Z}}$ is equivalent to the representation category of $\mathbb{F}_{4} \rtimes \mathbb{Z}/3\mathbb{Z} \cong \mathbb{S}_3$ with a non-symmetric braiding. Hence, $\left(\operatorname{Vec}_{(\mathbb{F}_{4}, N)}^{3, \alpha}\right)^{\mathbb{Z}/3\mathbb{Z}}$ is braided equivalent to $\mathcal{C}(\mathfrak{sl}_2, q, 6)$ or one of their zesting, \cite{zesting}.
    \item If $q$ is an odd prime and $p=2$, the braided $\mathbb{Z}/2\mathbb{Z}$-crossed category $\operatorname{Vec}_{(\mathbb{F}_{q^2}, N)}^{2, \alpha}$ is a Tambara-Yamagami category. The associated modular category corresponds to the elliptic case of \cite[Example 5.3]{GNN}.
\end{enumerate}

\end{example}

\section{Properties of $\operatorname{Vec}_{(\mathbb{F}_{q^2}, N)}^{p, \alpha}$}\label{sec: propiedades}

From now on, we will assume that $p$ and $q$ are odd primes, since the even cases were already discussed in Example \ref{Examples} and correspond to well-known examples of integral non-group-theoretical modular categories.

We denote by $\mathbf{F}_{q^2}^* = \operatorname{Hom}_{\mathbf{F}_q}(\mathbf{F}_{q^2}, \mathbf{F}_q)$ and $O(\mathbf{F}_{q^2} \oplus \mathbf{F}_{q^2}^*, Q)$ the \emph{split orthogonal group}, where $Q: \mathbf{F}_{q^2} \oplus \mathbf{F}_{q^2}^* \to \mathbf{F}_q$ is given by $Q(v, \alpha) = \alpha(v)$.

Let $B(v, w) = \frac{1}{2}[N(v + w) - N(v) - N(w)]$ be the associated bilinear form of $(\mathbf{F}_{q^2}, N)$. Since $B$ is non-degenerate, it defines a map $\widehat{(-)}: \mathbf{F}_{q^2} \to \mathbf{F}_{q^2}^*$, with $\widehat{v}(w) = B(v, w)$ for all $w \in \mathbf{F}_{q^2}$.

Then, we have orthogonal injections
\begin{align*}
  (\mathbf{F}_{q^2}, N) &\to (\mathbf{F}_{q^2} \oplus \mathbf{F}_{q^2}^*, Q), & v &\mapsto (v, \widehat{v}), \\
  (\mathbf{F}_{q^2}, -N) &\to (\mathbf{F}_{q^2} \oplus \mathbf{F}_{q^2}^*, Q), & v &\mapsto (v, -\widehat{v}).
\end{align*}

There exists a unique injective group homomorphism \(\alpha: O(\mathbf{F}_{q^2}, N) \to O(\mathbf{F}_{q^2} \oplus \mathbf{F}_{q^2}^*, Q)\), characterized by \(\alpha_g(v, \widehat{v}) = (v, \widehat{v})\) and \(\alpha_g(v, -\widehat{v}) = (g(v), -\widehat{g(v)})\), see \cite{davydov2021braided}. Then
\begin{align*}
    \alpha_g(v, \widehat{w}) &= \alpha_g\left(\frac{1}{2}[ (v + w, \widehat{v} + \widehat{w}) + (v - w, -(\widehat{v} - \widehat{w}))]\right) \\
    &= \frac{1}{2}[(v + w, \widehat{v} + \widehat{w}) + (g(v) - g(w), -(\widehat{g(v)} - \widehat{g(w)}))] \\
    &= \frac{1}{2}[( (\mathrm{Id} + g)(v), \widehat{(\mathrm{Id} - g)(v)}) + ((\mathrm{Id} - g)(w), \widehat{(\mathrm{Id} + g)(w)})].
\end{align*}

Hence, we can write
\begin{align}\label{eq:definition_matrix}
\alpha_g = \begin{pmatrix}
    \alpha & \beta \\
    \gamma & \delta
\end{pmatrix} \in O(\mathbf{F}_{q^2} \oplus \mathbf{F}_{q^2}^*, Q),
\end{align}
where
\begin{align*}
    \alpha = \frac{1}{2}(\mathrm{Id} + g), &\quad& \beta(\widehat{w}) = \frac{1}{2}(\mathrm{Id} - g)(w), \\
    \gamma(v) = \frac{1}{2}\widehat{\mathrm{Id} - g(v)}, &\quad& \delta(\widehat{w}) = \frac{1}{2}\widehat{\mathrm{Id} + g(w)},
\end{align*}
and \(\alpha: \mathbf{F}_{q^2} \to \mathbf{F}_{q^2}\), \(\beta: \mathbf{F}_{q^2}^* \to \mathbf{F}_{q^2}\), \(\gamma: \mathbf{F}_{q^2} \to \mathbf{F}_{q^2}^*\), and \(\delta: \mathbf{F}_{q^2}^* \to \mathbf{F}_{q^2}^*\).

\begin{proposition}\label{prop: fusion rules}
The simple objects in the fusion category $\mathcal{B} := \operatorname{Vec}_{(\mathbb{F}_{q^2}, N)}^{p, \alpha}$ are given by $\mathbf{F}_{q^2}$, which correspond to the group of (isomorphism classes of) invertibles in $
\mathcal B$, and non-invertibles $X_1, \ldots, X_{p-1}$, which are q-dimensional. The fusion rules are given by
\begin{align*}
 a \otimes b &= a + b, & a \otimes X_i = X_i \otimes a &= X_i, & X_i^* &= X_{p-i},
\end{align*}
and
\[
X_i \otimes X_j = 
\begin{cases} 
qX_{i+j} & \text{if } i + j \neq p, \\
\sum_{a \in \mathbf{F}_{q^2}} a & \text{if } i + j = p,
\end{cases}
\]

for all $a, b \in \mathbf{F}_{q^2}$.
\end{proposition}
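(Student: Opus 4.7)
The plan is to exploit the $\mathbb{Z}/p\mathbb{Z}$-grading $\mathcal{B} = \bigoplus_{i \in \mathbb{Z}/p\mathbb{Z}} \mathcal{B}_i$ coming from the crossed extension structure, with trivial component $\mathcal{B}_0 = \operatorname{Vec}_{(\mathbb{F}_{q^2}, N)}$. Because $c$ has order exactly $p$ the grading is faithful, so $\FPdim(\mathcal{B}_i) = q^2$ for every $i$. The simples and fusion in $\mathcal{B}_0$ are those of the underlying pointed modular category: the $q^2$ invertibles $a \in \mathbb{F}_{q^2}$ with $a \otimes b = a + b$. So all the actual work lies in the non-trivial components $\mathcal{B}_i$ for $i=1,\dots,p-1$.

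The key step will be to show that each such $\mathcal{B}_i$ contains a single simple object $X_i$ and that $\FPdim(X_i)=q$. Given any simple $X\in\mathcal{B}_i$, I would use the $\mathbb{Z}/p\mathbb{Z}$-crossed braiding to produce natural isomorphisms $c_{a,X}\colon a\otimes X\to X\otimes a$ and $c_{X,a}\colon X\otimes a\to T^i(a)\otimes X$ for every invertible $a\in\mathbb{F}_{q^2}$, and compose them to obtain $a\otimes X\cong c^i a\otimes X$ in $\mathcal{B}_i$. This forces $(c^i-1)a$ to lie in the stabilizer $L(X):=\{b\in\mathbb{F}_{q^2}:b\otimes X\cong X\}$ for every $a\in\mathbb{F}_{q^2}$. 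Since $c^i\neq 1$ and $\mathbb{F}_{q^2}$ is a field, $c^i-1$ is invertible; hence $L(X)=\mathbb{F}_{q^2}$. The standard identity $X\otimes X^*=\bigoplus_{a\in L(X)} a$ then yields $\FPdim(X)^2=q^2$ and $X\otimes X^*=\sum_{a\in\mathbb{F}_{q^2}} a$; moreover $a\otimes X\cong X$ for every invertible $a$ shows that $X$ (up to isomorphism) exhausts the simples of $\mathcal{B}_i$, so we can unambiguously call it $X_i$.

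The remaining fusion rules drop out of the grading and FP-dimension counting. From $L(X_i)=\mathbb{F}_{q^2}$ one reads off $a\otimes X_i = X_i = X_i \otimes a$. For $i+j\not\equiv 0\pmod p$ the product $X_i\otimes X_j$ has FPdim $q^2$ and lies in $\mathcal{B}_{i+j}$, whose only simple $X_{i+j}$ has FPdim $q$, forcing $X_i\otimes X_j = qX_{i+j}$. Finally $X_i^*\in\mathcal{B}_{p-i}$ must equal the unique simple $X_{p-i}$ there, so $X_i\otimes X_{p-i} = X_i\otimes X_i^* = \sum_{a\in\mathbb{F}_{q^2}} a$. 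The main obstacle is the stabilizer computation: it is the one step that genuinely uses the $G$-crossed structure rather than just the grading, and the arithmetic fact that $c^i-1$ is a unit in $\mathbb{F}_{q^2}$ is what produces the Tambara--Yamagami-like conclusion $L(X)=\mathbb{F}_{q^2}$ in this setting.
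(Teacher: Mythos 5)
Your proof is correct, but it takes a genuinely different route from the paper's. The paper's entire proof is a citation: since $\mathcal{B}$ has dimension $pq^2$, it invokes \cite[Proposition 3.1]{jordan-larson}, which reduces the determination of the simples and fusion rules of such $\mathbb{Z}/p\mathbb{Z}$-extensions of $\operatorname{Vec}_A$ to checking that the block $\beta=\frac{1}{2}(\mathrm{Id}-g)$ of the matrix $\alpha_g$ in \eqref{eq:definition_matrix} is invertible, which holds because $(\mathrm{Id}-g)(v)=(1-c^m)v$ with $c^m\neq 1$ in the field $\mathbb{F}_{q^2}$. You instead re-prove the relevant special case from scratch: the crossed-braiding computation $a\otimes X\cong c^ia\otimes X$ together with the fact that $c^i-1\in\mathbb{F}_{q^2}^\times$ yields the full stabilizer $L(X)=\mathbb{F}_{q^2}$, and pointedness of $\mathcal{B}_0$ plus $\operatorname{FPdim}$ counting give the rest. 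Note that both arguments hinge on the same arithmetic input --- invertibility of $\mathrm{Id}-g$, i.e., that $g$ acts on $\mathbb{F}_{q^2}$ without nonzero fixed vectors --- so your argument can be read as an unpacking of the Jordan--Larson criterion in this particular case; what it buys is self-containedness (no reliance on the matrix normal forms of \cite{jordan-larson}), at the cost of length. Two steps deserve one more line each: the identity $X\otimes X^*=\bigoplus_{a\in L(X)}a$ uses not just the standard multiplicity computation but also that $X\otimes X^*$ lies in the pointed component $\mathcal{B}_0$, so no non-invertible summands can occur; and the exhaustion claim (``$a\otimes X\cong X$ for all $a$ shows $X$ is the unique simple of $\mathcal{B}_i$'') does not follow from the displayed fact alone --- fix it either by noting that for any simple $Y\in\mathcal{B}_i$ the object $Y\otimes X^*\in\mathcal{B}_0$ contains some invertible $a$, whence $Y\cong a\otimes X\cong X$, or by observing that your stabilizer argument applies to every simple of $\mathcal{B}_i$, so each has Frobenius--Perron dimension $q$, and $\operatorname{FPdim}(\mathcal{B}_i)=q^2$ forces a single simple per non-trivial component.
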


\begin{proof}
Since $\mathcal B$ has dimension $pq^2$, by \cite[Proposition 3.1]{jordan-larson}, we only need to verify that \(\beta = \frac{1}{2}(\mathrm{Id} - g)\), as defined in \eqref{eq:definition_matrix}, is invertible. In our case, \((\mathrm{Id} - g)(v) = (1 - c^m)v\), for all \(v \in \mathbf{F}_{q^2}\) and \(0 \leq m < p\). Since \(c\) has order \(p\) and the operator \(\frac{1}{2}(\mathrm{Id} - g)\) is invertible, the criteria are satisfied.
\end{proof}

\begin{proposition}\label{prop:construction}
The modular category \(\left(\operatorname{Vec}_{(\mathbb{F}_{q^2}, N)}^{p, \alpha}\right)^{\mathbb{Z}/p\mathbb{Z}}\) has rank \(p^2 + \frac{q+1}{p}\). The following is a complete list of simple objects and their dimensions:

\begin{itemize}
    \item[(1)] There are exactly $p$ invertible objects \((\unit,\chi)\), indexed by \(\chi \in \widehat{\mathbb{Z}/p\mathbb{Z}}\). The corresponding object to \((\unit,\chi)\) is the unit object \(\unit\) with equivariant structure \(\chi(a)\id_\unit: \unit \to \unit\).
    \item[(2)] There are exactly \(\frac{q^2-1}{p}\) simple objects of dimension $p$, parameterized by the \(\mathbb Z/q\)-orbits of \(\mathbf{F}_{q^2}\). The corresponding object to an orbit \(\mathcal{O}\) is the object \(X_{\mathcal{O}}=\bigoplus_{a\in \mathcal{O}} a\) with equivariant structure \(\id_{X_{\mathcal{O}}}\).
    \item[(3)] There are exactly \((p-1)p\) simple objects of dimension \(q\), parameterized by pairs \((X_i, \chi)\), where \(i \in \mathbb{Z}/p\mathbb{Z}^*\) and \(\chi \in \widehat{\mathbb{Z}/p\mathbb{Z}}\). The corresponding object to the pair \((X_i, \chi)\) is the object \(X_i\) with equivariant structure \(\chi(a)\id_{X_i} : X_i \to X_i\).
\end{itemize}
\end{proposition}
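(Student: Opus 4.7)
The plan is to apply the standard description of simple objects in an equivariantization: if a finite group $G$ acts on a fusion category $\mathcal{B}$, then the simples of $\mathcal{B}^{G}$ are classified by pairs $([X], \pi)$, where $[X]$ ranges over the $G$-orbits of isomorphism classes of simples in $\mathcal{B}$, and $\pi$ is an irreducible projective representation of the stabilizer $G_{X}$ with respect to a $2$-cocycle coming from the equivariant obstruction for $X$. The simple attached to $([X], \pi)$ has Frobenius--Perron dimension $[G:G_{X}] \cdot \FPdim(X) \cdot \dim(\pi)$.

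First I would determine the $\mathbb{Z}/p\mathbb{Z}$-action on the simples listed in Proposition~\ref{prop: fusion rules}. On the invertibles, parameterized by $\mathbf{F}_{q^{2}}$, the generator acts by multiplication by $c$: since $c$ has order $p$ in $\mathbf{F}_{q^{2}}^{\ast}$, it fixes $0$ (corresponding to the unit $\mathbf{1}$) and acts freely on $\mathbf{F}_{q^{2}}\setminus\{0\}$, producing one singleton orbit and $(q^{2}-1)/p$ free orbits of size $p$. For the non-invertibles, recall that $\mathcal{B}$ is $\mathbb{Z}/p\mathbb{Z}$-graded as a crossed braided category with trivial component $\Vec_{(\mathbf{F}_{q^{2}},N)}$ of dimension $q^{2}$; since the total dimension is $pq^{2}$, each non-trivial component has dimension $q^{2}$ and contains exactly one simple, namely $X_{i}$ of dimension $q$. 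Because $\mathbb{Z}/p\mathbb{Z}$ is abelian its action preserves each graded component, so it must fix every $X_{i}$.

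Next I would apply the classification orbit by orbit. Since $\mathbb{Z}/p\mathbb{Z}$ is cyclic of prime order, $H^{2}(\mathbb{Z}/p\mathbb{Z}, \mathbb{C}^{\times}) = 0$, so the obstruction $2$-cocycle attached to any fixed simple is trivial and the relevant projective irreps reduce to ordinary characters of the stabilizer. This gives: from the orbit of $\mathbf{1}$, $p$ invertible simples $(\mathbf{1},\chi)$ of dimension $1$, indexed by $\chi \in \widehat{\mathbb{Z}/p\mathbb{Z}}$; from each of the $(q^{2}-1)/p$ free orbits $\mathcal{O} \subset \mathbf{F}_{q^{2}}\setminus\{0\}$, a single simple $X_{\mathcal{O}} = \bigoplus_{a \in \mathcal{O}} a$ with trivial equivariant structure and dimension $p$; and from each of the $p-1$ fixed simples $X_{i}$, $p$ simples $(X_{i},\chi)$ of dimension $q$, indexed by $\chi \in \widehat{\mathbb{Z}/p\mathbb{Z}}$, contributing $(p-1)p$ in total.

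Summing the counts produces a rank of $p + (q^{2}-1)/p + (p-1)p = p^{2} + (q^{2}-1)/p$, and the global dimension check $p\cdot 1 + \frac{q^{2}-1}{p}\cdot p^{2} + (p-1)p\cdot q^{2} = p^{2}q^{2}$ confirms that no simples have been missed, since $\FPdim(\mathcal{B}^{\mathbb{Z}/p\mathbb{Z}}) = p \cdot \FPdim(\mathcal{B}) = p^{2}q^{2}$. The only place where a genuine obstruction could conceivably appear is in classifying the projective irreps of the stabilizer for each fixed simple $X_{i}$, but the vanishing of $H^{2}(\mathbb{Z}/p\mathbb{Z}, \mathbb{C}^{\times})$ makes this automatic, so the argument is essentially bookkeeping once the orbit structure and the triviality of the grading action on the non-trivial homogeneous components are in hand.
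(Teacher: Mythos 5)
Your proof is correct and follows essentially the same route as the paper, which likewise determines the $\mathbb{Z}/p\mathbb{Z}$-action (trivial on the $X_i$ because the action preserves the grading, and given by $\rho_c$ on the trivial component) and then invokes the standard orbit--stabilizer description of simples in an equivariantization, citing \cite{burciu2013fusion} for exactly the bookkeeping you carry out explicitly, including the vanishing of $H^2(\mathbb{Z}/p\mathbb{Z},\mathbb{C}^\times)$. Note also that your total $p + \frac{q^2-1}{p} + (p-1)p = p^2 + \frac{q^2-1}{p}$ is the correct rank, consistent with the itemized list and with the abstract; the formula $p^2 + \frac{q+1}{p}$ in the proposition's opening sentence is a typo.
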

\begin{proof}
Since  $\operatorname{Vec}_{(\mathbb{F}_{q^2}, N)}^{p, \alpha}$ is a braided \(\mathbb{Z}/p\mathbb{Z}\)-crossed category, the \(\mathbb{Z}/p\mathbb{Z}\)-action respects the grading. Therefore, at the level of objects, it acts trivially on \(X_i\), and in the trivial component, the action is given exactly by \(\rho_c: \mathbf{F}_{q^2} \to \mathbf{F}_{q^2}\). Then, the description of the simple objects follows from straightforward computations, see \cite{burciu2013fusion}.
\end{proof}

The following criterion will be useful in determining whether $\operatorname{Vec}_{(\mathbb{F}_{q^2}, N)}^{p, \alpha}$ is non-group-theoretical.

\begin{proposition}
\label{Prop:criterion}
Let $p$ and $q$ be odd primes such that $p \mid q + 1$. Let $A = (\mathbb{Z}/q\mathbb{Z})^2$ and \[M = \begin{pmatrix}
    \alpha & \beta \\
    \gamma & \delta
\end{pmatrix} \in O(A \oplus A^*,Q), \quad (\text{the split orthogonal group})\] a matrix of order $p$, where $\alpha: A \to A$, $\beta: A^* \to A$, $\gamma: A \to A^*$, and $\delta: A^* \to A^*$, and $\beta$ is invertible. Then the $\mathbb{Z}/p\mathbb{Z}$-graded extensions of $\operatorname{Vec}_A$ associated with $M$ are group-theoretical if and only if $\mu_1/\mu_2 \in \mathbf{F}_q$, where $\mu_1, \mu_2 \in \mathbf{F}_{q^2}^*$ are the eigenvalues of $\alpha + \beta \delta \beta^{-1}$.
\end{proposition}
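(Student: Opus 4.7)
The plan is to translate the categorical condition of group-theoreticality into a linear-algebra condition on $M$. Recall that a modular category is group-theoretical if and only if it admits a Lagrangian algebra. Since the Drinfeld center $Z(\operatorname{Vec}_A)$ for $A = (\mathbb{F}_{q^2}, N)$ is the pointed modular category associated to the hyperbolic metric group $(A \oplus A^*, Q)$, and the braided autoequivalence group acting on this center is $O(A \oplus A^*, Q)$, the $\mathbb{Z}/p\mathbb{Z}$-action defining $\operatorname{Vec}_{(\mathbb{F}_{q^2}, N)}^{p, \alpha}$ lifts to the action of $M$ on $A \oplus A^*$.

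First, I would establish the reduction: the modular category $\left(\operatorname{Vec}_{(\mathbb{F}_{q^2}, N)}^{p, \alpha}\right)^{\mathbb{Z}/p\mathbb{Z}}$ is group-theoretical if and only if $M$ preserves a Lagrangian subgroup $L \subset A \oplus A^*$. The forward direction proceeds via de-equivariantization: a Lagrangian algebra in the equivariantization yields a $\mathbb{Z}/p\mathbb{Z}$-stable Lagrangian algebra in the trivial component's Drinfeld center, i.e., an $M$-invariant Lagrangian subgroup of $A \oplus A^*$. Conversely, such an $L$ together with a compatible cocycle produces a pointed fusion category $\mathcal{E}$ of dimension $pq$ whose twisted Drinfeld center recovers our modular category.

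Second, I would translate $M$-invariance of a Lagrangian subgroup into the stated eigenvalue condition on $T = \alpha + \beta \delta \beta^{-1}$. Since $\beta$ is invertible, any $M$-invariant subspace transverse to $\{0\} \oplus A^*$ is the graph of a linear map $\varphi : A \to A^*$, and $M$-invariance becomes the Riccati-type equation
\begin{equation*}
\varphi \beta \varphi + \varphi \alpha - \delta \varphi - \gamma = 0.
\end{equation*}
Introducing $\psi = \beta \varphi$ and using the orthogonality relations on the blocks of $M$ (namely $\alpha^T \gamma + \gamma^T \alpha = 0$, $\beta^T \delta + \delta^T \beta = 0$, and $\alpha^T \delta + \gamma^T \beta = I$) together with the Lagrangian/isotropy condition on $\varphi$, this reduces to a quadratic matrix equation whose $\mathbb{F}_q$-rational solvability is governed by the characteristic polynomial of $T$. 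A direct computation then shows that an $\mathbb{F}_q$-rational Lagrangian solution exists precisely when the eigenvalues $\mu_1, \mu_2 \in \mathbb{F}_{q^2}^*$ of the $2 \times 2$ operator $T$ satisfy $\mu_1/\mu_2 \in \mathbb{F}_q$: either both eigenvalues already lie in $\mathbb{F}_q$, or they are Galois conjugates whose ratio is nonetheless Frobenius-fixed.

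The main obstacle is the first reduction. Making the equivalence between group-theoreticality of the equivariantization and the existence of an $M$-invariant Lagrangian subgroup precise requires carefully tracking how Lagrangian algebras behave under de-equivariantization of braided $\mathbb{Z}/p\mathbb{Z}$-crossed extensions of pointed categories, and verifying compatibility with the $H^3(\mathbb{Z}/p\mathbb{Z}, \mathbb{C}^\times)$-twist $\alpha$. The second step is essentially a calculation in the linear algebra of matrices preserving the hyperbolic form, but the correct packaging of the orthogonality relations together with the Riccati equation into the clean statement about the eigenvalue ratio is where the main bookkeeping effort lies.
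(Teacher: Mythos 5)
Your proposal does not match the paper's proof, and as it stands it has genuine gaps. The paper's argument is short and citation-based: it conjugates $M$ by $\bigl(\begin{smallmatrix}\mathrm{Id} & 0\\ \delta\beta^{-1} & \mathrm{Id}\end{smallmatrix}\bigr)$ and then by $\bigl(\begin{smallmatrix}\mathrm{Id} & 0\\ 0 & \beta\end{smallmatrix}\bigr)$ to put it into the normal form $S$ treated by Jordan--Larson, with $S_{11}=\alpha+\beta\delta\beta^{-1}$, and then invokes their Theorem 3.2 together with Claim 4.2, which already contain the equivalence ``group-theoretical $\iff$ eigenvalue ratio in $\mathbf{F}_q$.'' Your plan is to re-derive that criterion from first principles via Lagrangian algebras and a Riccati equation. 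That is a legitimately different route, but the step you yourself flag as ``the main obstacle'' --- the equivalence between group-theoreticality and the existence of an $M$-invariant Lagrangian subgroup --- is precisely the content of the cited Jordan--Larson results, so leaving it unexecuted leaves the proof without its core.

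Concretely: (a) your opening claim ``a modular category is group-theoretical if and only if it admits a Lagrangian algebra'' is false in both directions: a Lagrangian algebra exhibits the category as a Drinfeld center of \emph{some} fusion category (e.g.\ $Z(\mathrm{Fib})$ admits one but is not group-theoretical, since group-theoreticality requires a Lagrangian algebra whose dual category is pointed), while a pointed modular category of prime dimension is group-theoretical yet admits no Lagrangian algebra at all, its dimension not being a square. Rescuing the biconditional in this setting would require an extra input such as the classification of fusion categories of dimension $pq$. Moreover, the proposition concerns the graded fusion category itself, not its equivariantization; transferring group-theoreticality between the two needs an argument (the paper proves only one direction, in the corollary, via \cite[Proposition 8.44]{ENO}). (b) Your converse direction (``$L$ plus a compatible cocycle produces a pointed category of dimension $pq$'') silently assumes the relevant cohomological obstruction vanishes and that the resulting dual is pointed. (c) In the linear-algebra step you treat only Lagrangians transverse to $0\oplus A^*$, i.e.\ graphs of $\varphi:A\to A^*$; even with $\beta$ invertible an $M$-invariant non-graph Lagrangian is possible a priori (e.g.\ $\langle(0,\xi),(\beta\xi,\delta\xi)\rangle$ when $\xi(\beta\xi)=0$) and must be ruled in or out. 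Finally, the decisive identification of Riccati solvability over $\mathbf{F}_q$ with the eigenvalue ratio of the specific operator $\alpha+\beta\delta\beta^{-1}$ is asserted (``a direct computation shows'') rather than performed; this is exactly where the paper's conjugation does the work, and where signs matter --- indeed the paper's own display has a sign slip, as the conjugation yields $\alpha+\beta\delta\beta^{-1}$ in the $(1,1)$ block, consistent with the statement, not $\alpha-\beta\delta\beta^{-1}$.
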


\begin{proof}
By \cite[Theorem 3.2]{jordan-larson}, we have

\[\begin{pmatrix}
    \mathrm{Id} & 0 \\ -\delta \beta^{-1} & \mathrm{Id}
\end{pmatrix} \begin{pmatrix}
    \alpha & \beta \\ \gamma & \delta
\end{pmatrix} \begin{pmatrix}
    \mathrm{Id} & 0 \\ \delta \beta^{-1} & \mathrm{Id}
\end{pmatrix} = \begin{pmatrix}
    \alpha - \beta \delta \beta^{-1} & \beta \\ \beta^{-1*} & 0
\end{pmatrix} \in O(A \oplus A^*).\]

Now consider

\[\begin{pmatrix}
    \mathrm{Id} & 0 \\ 0 & \beta
\end{pmatrix} \begin{pmatrix}
    \alpha - \beta \delta \beta^{-1} & \beta \\ \beta^{-1*} & 0
\end{pmatrix} \begin{pmatrix}
    \mathrm{Id} & 0 \\ 0 & \beta^{-1}
\end{pmatrix} = \begin{pmatrix}
    \alpha - \beta \delta \beta^{-1} & \mathrm{Id} \\ \beta \beta^{-1*} & 0
\end{pmatrix} := S.\]

The matrix $S$ has the form required for applying the criterion in the proof of \cite[Theorem 1.1]{jordan-larson}, meaning that $S_{11}$ and $S_{21}$ are simultaneously diagonalizable. If we denote the eigenvalues of $S_{21}$ as $\mu_1 = -\lambda, \mu_2 = -\lambda^{-1}$, then $\lambda_2 / \lambda_1 = \lambda$. Then by \cite[Claim 4.2]{jordan-larson}, the associated $\mathbb{Z}/p\mathbb{Z}$-extension is group-theoretical if and only if $\lambda \in \mathbf{F}_q$, that is, if the quotient (in any order) of the eigenvalues of $S_{11}$ lies in $\mathbf{F}_q$.
\end{proof}

\begin{theorem}\label{th: main}
The fusion category  $\operatorname{Vec}_{(\mathbb{F}_{q^2}, N)}^{p, \alpha}$ is non-group theoretical.
\end{theorem}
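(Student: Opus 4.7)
The plan is to apply the group-theoreticality criterion of Proposition~\ref{Prop:criterion} to $M=\alpha_{\rho_c}$, using the explicit block form \eqref{eq:definition_matrix}. After a suitable identification of $\mathbf{F}_{q^2}^*$ with $\mathbf{F}_{q^2}$, the four blocks become pairwise commuting $\mathbf{F}_q$-linear operators, which reduces the computation of the eigenvalues of $\alpha+\beta\delta\beta^{-1}$ to a scalar question in $\mathbf{F}_{q^2}$.

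\textbf{Reduction to a single element of $\mathbf{F}_{q^2}$.} Identify $\mathbf{F}_{q^2}^* \cong \mathbf{F}_{q^2}$ via $\widehat{(\cdot)}$, where $\widehat{v}(w)=B(v,w)$. Under this identification, each of $\alpha,\beta,\gamma,\delta$ from \eqref{eq:definition_matrix} becomes an $\mathbf{F}_q$-linear multiplication on $\mathbf{F}_{q^2}$ by an element of $\mathbf{F}_{q^2}$: $\alpha$ and $\delta$ correspond to multiplication by $\tfrac{1+c}{2}$, while $\beta$ and $\gamma$ correspond to multiplication by $\tfrac{1-c}{2}$. Since $c\neq 1$ and $q$ is odd, $\beta$ is invertible. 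As these four operators all lie in a commutative subring, $\beta\delta\beta^{-1}=\delta$, and hence $\alpha+\beta\delta\beta^{-1}$ is multiplication by $1+c$. Its eigenvalues (over $\mathbf{F}_{q^2}$) are the Galois conjugates $\mu_1=1+c$ and $\mu_2=1+c^q$, both nonzero since $c$ has odd order, so $c\neq -1$.

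\textbf{Arithmetic obstruction.} The criterion then becomes: the category is non-group-theoretical iff the ratio $r:=(1+c)/(1+c^q)$ is not in $\mathbf{F}_q$. Applying $\sigma$ gives $\sigma(r)=r^{-1}$, so $r\in\mathbf{F}_q$ iff $r^2=1$, i.e.\ $r=\pm 1$. Each case contradicts the hypotheses $p,q$ odd, $p\mid q+1$, $c^p=1$, $c\neq 1$, $N(c)=1$: the case $r=1$ forces $c\in\mathbf{F}_q^*$, whose order divides both $p$ and $q-1$, but $\gcd(p,q-1)=1$ (since $p\mid q+1$ and both are odd), so $c=1$, a contradiction; the case $r=-1$ together with $cc^q=1$ forces $c$ to be a root of $(t+1)^2$, i.e.\ $c=-1$, contradicting $c^p=1$ for $p$ odd. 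Both $\mu_1,\mu_2\in\mathbf{F}_{q^2}^*$ (already noted), so Proposition~\ref{Prop:criterion} applies and the category is non-group-theoretical.

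\textbf{Main obstacle.} The only delicate point is verifying that under the identification $\widehat{(\cdot)}$ the blocks of \eqref{eq:definition_matrix} really become multiplications by elements of $\mathbf{F}_{q^2}$ (so that the key commutation $\beta\delta\beta^{-1}=\delta$ is justified); after this, the arithmetic in $\mathbf{F}_{q^2}$ is a short Galois-conjugation exercise that crucially uses both odd-prime hypotheses on $p$ and $q$.
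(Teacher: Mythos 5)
Your proof is correct and takes essentially the same route as the paper's: both apply Proposition~\ref{Prop:criterion} to the matrix $\alpha_g$ with $g(v)=cv$, reduce to the eigenvalues of $\alpha+\beta\delta\beta^{-1}=\mathrm{Id}+g$, and show their ratio cannot be Galois-invariant. Your only departures are cosmetic and in fact tighten the exposition: you justify the identity $\alpha+\beta\delta\beta^{-1}=\mathrm{Id}+g$ (which the paper merely asserts) by transporting the blocks to commuting multiplication operators on $\mathbf{F}_{q^2}$, and your observation $\sigma(r)=r^{-1}$, forcing $r=\pm1$ and then case-by-case contradictions from $p\mid q+1$ and the odd order of $c$, is the same computation the paper packages as the quartic factorization $x^4+2x^3-2x-1=(x+1)^3(x-1)$.
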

\begin{proof}
We will apply the criterion of Proposition \ref{Prop:criterion} to the matrix
\[\alpha_g = \begin{pmatrix}
    \alpha & \beta \\
    \gamma & \delta
\end{pmatrix} \in O(\mathbf{F}_{q^2} \oplus \mathbf{F}_{q^2}^*, Q),\]
where $g(v) = cv$. Note that in our specific situation, $\alpha + \beta \delta \beta^{-1} = \mathrm{Id} + g$. Hence, we need to ascertain whether $\frac{\mu_1}{\mu_2} \notin \mathbf{F}_q$, where $\mu_i$ are the eigenvalues of $\mathrm{Id} + g$.

As $g$ is orthogonal, its eigenvalues have the form $\beta$ and $\beta^{-1}$, where $\beta \notin \mathbf{F}_q$ since $g$ is not diagonalizable over $\mathbf{F}_q$. Note that the action of the Galois group permutes the eigenvalues of $g$, thus $\sigma(\beta) = \beta^{-1}$. Now, as the eigenvalues of $\mathrm{Id} + g$ are $1 + \beta$ and $1 + \beta^{-1}$, the criterion is based on checking whether $\lambda=\frac{1 + \beta}{1 + \beta^{-1}}$ (or equivalently $\lambda = \frac{1 + \beta^{-1}}{1 + \beta}$) is fixed by the action of the Galois group. Assume that $\sigma(\lambda) = \lambda$, meaning $\frac{1 + \beta}{1 + \beta^{-1}} = \frac{1 + \beta^{-1}}{1 + \beta}$, which is equivalent to
\begin{align}\label{eq:beta_fixed_by_galois}
\beta^4 + 2\beta^3 - 2\beta - 1 = 0.
\end{align}
However, $$x^4 + 2x^3 - 2x - 1 = (x + 1)^3(x - 1),$$ then \eqref{eq:beta_fixed_by_galois} implies that $\beta = 1$ or $\beta = -1$, which is a contradiction. The contradiction arose from assuming that $\lambda \in \mathbf{F}_q$.

Therefore, by Proposition \ref{Prop:criterion}, the category $\operatorname{Vec}_{(\mathbb{F}_{q^2}, N)}^{p, \alpha}$ is non-group-theoretical.

\end{proof}

\begin{corollary}
The modular category $\left(\operatorname{Vec}_{(\mathbb{F}_{q^2}, N)}^{p, \alpha})^{\mathbb{Z}/p\mathbb{Z}}\right)$ is non-group-theoretical.
\end{corollary}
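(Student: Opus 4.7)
The plan is to deduce the corollary from Theorem~\ref{th: main} via the Morita invariance of group-theoreticity. Denote $\mathcal{D} := \left(\operatorname{Vec}_{(\mathbb{F}_{q^2}, N)}^{p, \alpha}\right)^{\mathbb{Z}/p\mathbb{Z}}$. Since $\mathcal{D}$ arises as an equivariantization, it contains $\Rep(\mathbb{Z}/p\mathbb{Z})$ as a Tannakian subcategory, and de-equivariantization with respect to $\Rep(\mathbb{Z}/p\mathbb{Z})$ returns $\operatorname{Vec}_{(\mathbb{F}_{q^2}, N)}^{p, \alpha}$ at the level of underlying fusion categories.

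The key input is the standard fact (see \cite{DGNO}) that whenever a fusion category $\mathcal{D}$ admits a central embedding $\Rep(G) \hookrightarrow \mathcal{Z}(\mathcal{D})$, it is Morita equivalent to its de-equivariantization $\mathcal{D}_G$. Concretely, the regular algebra $A = \operatorname{Fun}(G) \in \Rep(G) \subseteq \mathcal{D}$ is a connected étale algebra, and the category of right $A$-modules in $\mathcal{D}$, regarded as a $\mathcal{D}$-module category, realizes this Morita equivalence; its dual fusion category is precisely $\mathcal{D}_G$. Applying this with $G = \mathbb{Z}/p\mathbb{Z}$ yields a Morita equivalence between $\mathcal{D}$ and $\operatorname{Vec}_{(\mathbb{F}_{q^2}, N)}^{p, \alpha}$.

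Since a fusion category is by definition group-theoretical precisely when it is Morita equivalent to a pointed fusion category, this property is manifestly a Morita invariant. Hence, if $\mathcal{D}$ were group-theoretical, then so would be $\operatorname{Vec}_{(\mathbb{F}_{q^2}, N)}^{p, \alpha}$, contradicting Theorem~\ref{th: main}. I do not anticipate any genuine obstacle; the only delicate point worth emphasizing is that the Morita equivalence is of \emph{underlying} fusion categories (forgetting the braiding and the $\mathbb{Z}/p\mathbb{Z}$-crossed structure), which is exactly the level at which group-theoreticity is defined, so the transfer of (non-)group-theoreticity from $\operatorname{Vec}_{(\mathbb{F}_{q^2}, N)}^{p, \alpha}$ to its equivariantization is unconditional.
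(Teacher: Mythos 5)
Your key lemma is false, and this is a genuine gap rather than a citation slip: an equivariantization $\mathcal{D}=\mathcal{C}^{G}$ is in general \emph{not} Morita equivalent to its de-equivariantization $\mathcal{C}=\mathcal{D}_G$. The fastest check is that Morita equivalent fusion categories have equivalent Drinfeld centers and hence equal Frobenius--Perron dimension, while here $\FPdim \left(\operatorname{Vec}_{(\mathbb{F}_{q^2}, N)}^{p, \alpha}\right)^{\mathbb{Z}/p\mathbb{Z}} = p^2q^2$ but $\FPdim \operatorname{Vec}_{(\mathbb{F}_{q^2}, N)}^{p, \alpha} = pq^2$, so no Morita equivalence between them can exist. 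For a minimal counterexample to your ``standard fact,'' take $\mathcal{D}=\Rep(G)$: it has the required central embedding $\Rep(G)\hookrightarrow \mathcal{Z}(\Rep(G))$ and de-equivariantizes to $\Vec$, yet $\Rep(G)$ is not Morita equivalent to $\Vec$ for $G\neq 1$ (the Morita class of $\Vec$ is just $\Vec$). What is actually true is that $\operatorname{Mod}_{\mathcal{D}}(\operatorname{Fun}(G))$ is a $\mathcal{D}$-module category whose dual fusion category is the crossed product $\mathcal{D}_G\rtimes G$, \emph{not} $\mathcal{D}_G$; in the test case this correctly returns $\Vec_G$, recovering the classical Morita equivalence between $\Rep(G)$ and $\Vec_G$.

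The conclusion survives once this is corrected, by either of two short routes. First, $\mathcal{D}_G$ sits inside $\mathcal{D}_G\rtimes G$ as a fusion subcategory (the trivial component), and by \cite[Proposition 8.44]{ENO} group-theoreticity passes to fusion subcategories; so if $\left(\operatorname{Vec}_{(\mathbb{F}_{q^2}, N)}^{p, \alpha}\right)^{\mathbb{Z}/p\mathbb{Z}}$ were group-theoretical, then so would be $\operatorname{Vec}_{(\mathbb{F}_{q^2}, N)}^{p, \alpha}$, contradicting Theorem~\ref{th: main}. Second---and this is the paper's actual argument---no Morita theory is needed at all: the free-module (de-equivariantization) functor $\left(\operatorname{Vec}_{(\mathbb{F}_{q^2}, N)}^{p, \alpha}\right)^{\mathbb{Z}/p\mathbb{Z}} \to \operatorname{Vec}_{(\mathbb{F}_{q^2}, N)}^{p, \alpha}$ is a surjective tensor functor, and \cite[Proposition 8.44]{ENO} also states that the image of a group-theoretical category under a surjective tensor functor is group-theoretical, which yields the same contradiction directly. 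You should replace the claimed Morita equivalence with one of these arguments; as written, the central step of your proof asserts something false.
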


\begin{proof}
The de-equivariantization functor defines a surjective tensor functor \[(\operatorname{Vec}_{(\mathbb{F}_{q^2}, N)}^{p, \alpha})^{\mathbb{Z}/p\mathbb{Z}} \to \operatorname{Vec}_{(\mathbb{F}_{q^2}, N)}^{p, \alpha}.\] Hence, if \((\operatorname{Vec}_{(\mathbb{F}_{q^2}, N)}^{p, \alpha})^{\mathbb{Z}/p\mathbb{Z}}\) is group-theoretical, it follows from \cite[Proposition 8.44]{ENO} that \(\operatorname{Vec}_{(\mathbb{F}_{q^2}, N)}^{p, \alpha}\) is also group-theoretical, which contradicts Theorem \ref{th: main}.
\end{proof}

\begin{theorem}
Let \(p\) and \(q\) be odd primes with \(p < q\). If there exists a non-group-theoretical modular category of dimension \(p^2q^2\), then \(p \mid (q+1)\), and the non-group-theoretical modular categories are of the form \(\left(\operatorname{Vec}_{(\mathbb{F}_{q^2}, N)}^{p, \alpha}\right)^{\mathbb{Z}/p\mathbb{Z}}\), for some 
$\alpha \in H^3(\mathbb{Z}/p\mathbb{Z},\mathbb C^*)\cong \mathbb{Z}/p\mathbb{Z}$.
\end{theorem}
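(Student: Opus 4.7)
The plan is to extract enough structural information from a hypothetical non-group-theoretical modular category $\mathcal{C}$ of dimension $p^2q^2$ to identify it with one of our constructions, in particular forcing $p \mid q+1$.

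First I would establish that $\mathcal{C}$ admits a faithful $\mathbb{Z}/p\mathbb{Z}$-grading whose trivial component $\mathcal{D}$ is a modular subcategory of dimension $pq^2$. Since $\mathcal{C}$ is integral of dimension $p^a q^b$, it is solvable by the Burnside-type theorem of Etingof--Nikshych--Ostrik, hence weakly group-theoretical and equipped with a nontrivial universal grading. A case analysis on the possible gradings shows that a $\mathbb{Z}/q\mathbb{Z}$- or $\mathbb{Z}/q^2\mathbb{Z}$-grading would force the trivial component to have dimension dividing $p^2$, hence to be pointed, and together with nilpotence of a $q$-power graded extension this would render $\mathcal{C}$ group-theoretical. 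Therefore the only possibility compatible with non-group-theoreticity is a $\mathbb{Z}/p\mathbb{Z}$-grading with $\dim \mathcal{D} = pq^2$.

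Next I would identify $\mathcal{D}$ as a braided $\mathbb{Z}/p\mathbb{Z}$-equivariantization of a pointed modular category on a $q$-elementary abelian group. Classification results for modular categories of dimension $pq^2$, together with the fact that de-equivariantization by a suitable Tannakian subcategory preserves non-group-theoreticity, yield a faithful $\mathbb{Z}/p\mathbb{Z}$-action on a metric group $(A,t)$ with $|A|=q^2$, and $\mathcal{C}$ is then recovered by gauging this action in the sense of Definition~\ref{def: construction}. Since $A$ has $q$-power order with $p<q$, it is elementary abelian, hence $A \cong \mathbf{F}_q^2$ as an abelian group, and $t$ corresponds to a non-degenerate quadratic form $Q$ on $\mathbf{F}_q^2$.

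The crucial step is to force $p \mid q+1$. Applying Proposition~\ref{Prop:criterion} to the resulting $\mathbb{Z}/p\mathbb{Z}$-crossed extension, non-group-theoreticity is equivalent to the ratio of eigenvalues of the order-$p$ action on $A$ not lying in $\mathbf{F}_q$. This rules out $Q$ being isotropic: in the hyperbolic case the orthogonal group embeds diagonally in $\operatorname{GL}_2(\mathbf{F}_q)$ and order-$p$ elements have eigenvalue ratios in $\mathbf{F}_q^*$. Hence $Q$ is anisotropic, and the orthogonal group of $(A,Q)$ is isomorphic to $O(\mathbf{F}_{q^2}, N)$, which by Section~\ref{sec: definition of modular cat} is dihedral of order $2(q+1)$. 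Since $p$ is odd, any element of order $p$ lies in the cyclic rotation subgroup of order $q+1$, yielding $p \mid q+1$. Finally, with $\mathcal{D}$ identified up to braided equivalence, uniqueness of $\mathcal{C}$ follows from Lan--Kong--Wen \cite{LKW}: minimal modular extensions of $\mathcal{D}$ form a torsor over $H^3(\mathbb{Z}/p\mathbb{Z}, \mathbb{C}^\times) \cong \mathbb{Z}/p\mathbb{Z}$, and our $p$ constructions $\left(\operatorname{Vec}_{(\mathbb{F}_{q^2}, N)}^{p,\alpha}\right)^{\mathbb{Z}/p\mathbb{Z}}$ exhaust them. The main obstacle will be the second step, where showing that $\mathcal{D}$ admits this specific equivariantization structure requires a careful analysis of modular categories of dimension $pq^2$ and of how non-group-theoreticity of $\mathcal{C}$ propagates to constraints on the braiding of $\mathcal{D}$.
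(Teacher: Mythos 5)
Your overall strategy is the same as the paper's (reduce $\mathcal{C}$ to a $\mathbb{Z}/p\mathbb{Z}$-gauging of a metric group of order $q^2$, exclude the cyclic and hyperbolic cases, conclude anisotropy and $p\mid q+1$, then get uniqueness from the dihedral structure and the $H^3$-torsor of \cite{LKW}), but your first two steps contain genuine errors as written. The trivial component $\mathcal{D}=\mathcal{C}_{ad}$ of the faithful $\mathbb{Z}/p\mathbb{Z}$-grading is \emph{not} modular in the non-group-theoretical case: since $\mathcal{C}_{pt}$ cannot be modular (otherwise $\mathcal{C}\cong \mathcal{C}_{pt}\boxtimes \mathcal{C}_{ad}$ would be group-theoretical), it is Tannakian, $\mathcal{C}_{pt}\cong \operatorname{Rep}(\mathbb{Z}/p\mathbb{Z})$, and hence $\mathcal{C}_{pt}\subseteq (\mathcal{C}_{pt})'=\mathcal{C}_{ad}$, so $\operatorname{Rep}(\mathbb{Z}/p\mathbb{Z})$ sits in the M\"uger center of $\mathcal{D}$. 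This breaks your second step, which proposes to apply ``classification results for modular categories of dimension $pq^2$'' to $\mathcal{D}$: those results simply do not apply to a degenerate category. The correct move, which is what the paper does, is to condense (de-equivariantize) $\mathcal{C}$ itself along the Tannakian subcategory $\mathcal{C}_{pt}$, obtaining a genuinely modular category of dimension $q^2$, which is then forced to be pointed, i.e.\ a metric group category $\mathcal{C}(\mathbb{Z}/q^2\mathbb{Z},Q)$ or $\mathcal{C}((\mathbb{Z}/q\mathbb{Z})^2,P)$, and to recognize $\mathcal{C}$ as a $\mathbb{Z}/p\mathbb{Z}$-gauging of it.

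Two further concrete slips. First, ``since $A$ has $q$-power order with $p<q$, it is elementary abelian'' is a non sequitur: nothing about $p<q$ rules out $A\cong \mathbb{Z}/q^2\mathbb{Z}$. The paper excludes it because $O(\mathbb{Z}/q^2\mathbb{Z},Q)=\langle\pm\operatorname{id}\rangle$ has order $2$, so an odd-order $\mathbb{Z}/p\mathbb{Z}$ acts only trivially and the resulting gauging is group-theoretical. Second, your grading case analysis is off: a faithful $\mathbb{Z}/q\mathbb{Z}$-grading has trivial component of dimension $p^2q$, which does not divide $p^2$, so that branch is not dispatched as you claim; the paper instead imports the corrected statement of \cite[Theorem 4.2]{AIM2012}, giving $\operatorname{FPdim}(\mathcal{B}_{pt})=p$ and $p\mid q^2-1$ directly, and your solvability argument would need to be developed into an actual proof of this. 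On the positive side, your use of Proposition \ref{Prop:criterion} to eliminate the hyperbolic case is sound and is a more self-contained alternative to the paper's citation of \cite[Theorem 4.8]{AIM2012}: for $g=\operatorname{diag}(a,a^{-1})$ with $a\in\mathbb{F}_q^*$ of order $p$, the map $\frac{1}{2}(\operatorname{Id}-g)$ is invertible and the eigenvalue ratio $(1+a)/(1+a^{-1})=a$ lies in $\mathbb{F}_q$, so the crossed extension is group-theoretical (and hence so is its equivariantization, since equivariantization preserves group-theoreticity). Your concluding uniqueness step --- the unique odd cyclic subgroup of the dihedral group $O(\mathbb{F}_{q^2},N)$ together with the Lan--Kong--Wen torsor --- matches the paper.
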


\begin{proof}
Let $\cB$ a non-group-theoretical modular category of dimension $p^2q^2$. Using the same ideas of \cite[Theorem 4.2]{AIM2012} we have that the item  (c) is corrected as: $p|(q^2-1)$ and $\FPdim(\cB_{pt})=p$. 

Now $\cB_{pt}$ cannot be modular, since then $\cB=\cB_{pt}\boxtimes \cB_{ad}$ as braided fusion categories and then group-theoretical. Hence, $\cB_{pt}$ is Tannakian and we have that the $\Z/p\Z$-condensation $[\cB_{\mathbb{Z}/p\mathbb{Z}}]_e$ is a modular category of dimension $q^2$, which must be pointed, i.e. a metric group category of the form $\cC(\Z/q^2\Z,Q)$ or $\cC((\Z/q)^2,P)$ where $Q,P$ are non-degenerate quadratic forms \cite[Section 8.4]{EGNO}.

Thus $\cB$ is a $\Z/p\Z$-gauging of one of the above metric group categories. Since $O(\Z/q^2\Z,Q) = \langle \pm \id \rangle$, it admits only the trivial $\Z/p\Z$-action, resulting in a group-theoretical modular category. Consequently, $\cB$ is a $\Z/p\Z$-gauging of $\cC((\Z/q)^2,P)$. Here, we encounter two distinct quadratic forms up to equivalence: an anisotropic (or elliptic) quadratic form corresponding to $(\mathbf{F}_{q^2},N)$ and an isotropic (or hiperbolic) form of the form $t(x_1,x_2)=x_1^2+x_2^2$. The isotropic orthogonal group is  a dihedral group of order $2(q-1)$, explicitly given by $$\left\{ \begin{pmatrix}
    a &0\\ 0 & a^{-1}
\end{pmatrix} \text{(rotations)}, \begin{pmatrix}
    0 &a^{-1}\\ a & 0
\end{pmatrix} \text{(reflections)} :a\in \mathbb F_q^* \right\}.$$ Therefore, in order to have a non-trivial $\mathbb{Z}/p\mathbb{Z}$-action, $p$ must divide $q-1$. However, according to \cite[Theorem 4.8]{AIM2012} (with the additional condition $p \mid q-1$), $\mathcal{B}$ would be group-theoretical. Consequently, since $\mathcal{B}$ is non-group-theoretical, it follows that $p$ divides $q+1$. Therefore, we only need to consider the anisotropic case where the orthogonal group is a dihedral group of order \(2(q+1)\). Since every cyclic subgroup of odd order in a dihedral group is unique, there is basically a unique \(\mathbb{Z}/p\mathbb{Z}\)-action, which corresponds to the modular categories described in Definition \ref{def: modular}.

\end{proof}

\section{Group-theoretical modular categories}\label{sec: group-theoretical}

The canonical example of a group-theoretical modular category is the Drinfeld center of a pointed fusion category $\Vec_G^{\omega}$. These categories correspond to the category of representations of the twisted Drinfeld double. However, there are several examples of group-theoretical modular categories that are not Drinfeld centers. The simplest example corresponds to pointed modular categories of prime dimension. Yet, it is also possible to construct non-pointed, group-theoretical modular categories that are not Drinfeld centers. For example, $\mathbb{Z}/2\mathbb{Z}$-extensions associated with fermions of Drinfeld centers can be considered, since they change the central charge, \cite{bruillard2017fermionic}.

The next result shows that in the case of group-theoretical modular categories of dimension $p^2q^2$, we only have the pointed ones and the twisted Drinfeld centers of non-abelian groups of order $pq$, studied in detail in \cite{mignard2021modular}.

\begin{proposition}
If $\mathcal{B}$ is a group-theoretical modular category of dimension $p^2q^2$, where $p$ and $q$ are distinct primes, then $\mathcal{B}$ is either pointed or is the twisted Drinfeld double of a non-abelian group.
\end{proposition}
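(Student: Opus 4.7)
The plan is to parallel the analysis in the proof of the previous theorem, tracking how the group-theoretical hypothesis constrains the pointed modular category obtained after de-equivariantization. Assume $\mathcal{B}$ is group-theoretical modular of dimension $p^2q^2$ and not pointed. The goal is to show that $\mathcal{B}_{pt}$ is Tannakian of prime dimension, that the corresponding de-equivariantization is a pointed modular category of dimension $q^2$, and finally that the induced $\mathbb{Z}/p\mathbb{Z}$-action combined with group-theoreticity forces $\mathcal{B}$ to be a twisted Drinfeld double of a non-abelian group of order $pq$.

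First, repeating the dimension analysis from the proof of the previous theorem, $\mathcal{B}_{pt}$ cannot be modular (otherwise $\mathcal{B}$ splits as $\mathcal{B}_{pt} \boxtimes \mathcal{B}_{pt}'$, which after iteration reduces to the pointed case). Hence $\mathcal{B}_{pt}$ is Tannakian, and by analyzing the universal grading together with integrality one concludes that $\FPdim \mathcal{B}_{pt} = p$, up to swapping $p$ and $q$. The de-equivariantization $[\mathcal{B}_{\mathbb{Z}/p\mathbb{Z}}]_e$ is then modular of dimension $q^2$, hence pointed, so it is either $\mathcal{C}((\mathbb{Z}/q\mathbb{Z})^2, Q)$ or $\mathcal{C}(\mathbb{Z}/q^2\mathbb{Z}, Q')$ for a non-degenerate quadratic form.

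Second, as in the previous theorem, the cyclic case $\mathcal{C}(\mathbb{Z}/q^2\mathbb{Z}, Q')$ admits only the trivial $\mathbb{Z}/p\mathbb{Z}$-action and so yields a pointed $\mathcal{B}$, contradicting the hypothesis. In the case $\mathcal{C}((\mathbb{Z}/q\mathbb{Z})^2, Q)$, Proposition~\ref{Prop:criterion} together with Theorem~\ref{th: main} shows that the anisotropic form yields a non-group-theoretical $\mathcal{B}$; so $Q$ must be hyperbolic. The orthogonal group is then dihedral of order $2(q-1)$, and the existence of an order-$p$ element forces $p \mid q-1$. Consequently, the non-abelian group $G = \mathbb{Z}/q\mathbb{Z} \rtimes \mathbb{Z}/p\mathbb{Z}$ of order $pq$ exists, and $\mathcal{B}$ is a $\mathbb{Z}/p\mathbb{Z}$-equivariantization of a $\mathbb{Z}/p\mathbb{Z}$-gauging of $\mathcal{C}((\mathbb{Z}/q\mathbb{Z})^2, Q)$.

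Finally, to identify $\mathcal{B}$ with $\Rep(D^\omega(G))$ for an appropriate $\omega \in H^3(G,\mathbb{C}^\times)$, the plan is to invoke the uniqueness of minimal modular extensions of $\Rep(G)$ up to twisting by $H^3(\mathbb{Z}/p\mathbb{Z}, \mathbb{C}^\times)$ as discussed in Remark~\ref{rmk: sobre la cartegoria modular}(4) (following \cite{LKW}), and to match the fusion data with the explicit description of twisted Drinfeld doubles of non-abelian groups of order $pq$ from \cite{mignard2021modular}. The main obstacle will be this last identification step: while matching ranks, FP dimensions, and the universal grading is straightforward, identifying the braided and associator data with the twisted Drinfeld double requires either a careful application of the minimal-modular-extension classification or the construction of an explicit braided equivalence between the two descriptions.
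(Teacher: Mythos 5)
Your route diverges from the paper's, and it stalls exactly where you flag the obstacle: the final identification of the $\mathbb{Z}/p\mathbb{Z}$-gauging of the hyperbolic category $\mathcal{C}((\mathbb{Z}/q\mathbb{Z})^2,Q)$ with some $\Rep(D^\omega(G))$ is only planned, not carried out, and it is the whole content of the statement once you have assumed $\mathcal{B}$ non-pointed. The torsor statement from \cite{LKW} gives you that the minimal modular extensions of the relevant equivariantization form a torsor over $H^3(\mathbb{Z}/p\mathbb{Z},\mathbb{C}^\times)$, but to conclude you would still have to show that the twisted doubles $\Rep(D^\omega(G))$, $\omega\in H^3(G,\mathbb{C}^\times)$, exhaust this torsor — e.g.\ by analyzing the restriction map $H^3(G,\mathbb{C}^\times)\to H^3(\mathbb{Z}/p\mathbb{Z},\mathbb{C}^\times)$ and checking that all $p$ classes are hit — none of which appears in your proposal. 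There are also upstream gaps: the claim $\FPdim(\mathcal{B}_{pt})=p$ is borrowed from the corrected item (c) of \cite[Theorem 4.2]{AIM2012}, which the paper invokes for a \emph{non-group-theoretical} $\mathcal{B}$; in your setting (group-theoretical and non-pointed) you must separately exclude $\FPdim(\mathcal{B}_{pt})\in\{p^2,q^2,pq,pq^2,\dots\}$, and your parenthetical ``after iteration reduces to the pointed case'' for the subcase where $\mathcal{B}_{pt}$ is modular is not yet an argument. Finally, the proposition is stated for arbitrary distinct primes, while your argument inherits the odd-prime assumptions of Section \ref{sec: propiedades} (classification of quadratic forms on $(\mathbb{Z}/q\mathbb{Z})^2$, dihedral orthogonal groups, division by $2$).

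For comparison, the paper's proof avoids all of this with a structure theorem applied at the outset: by \cite[Proposition 10]{CGPW}, every group-theoretical modular category is a gauging of a pointed modular category $\mathcal{P}$ along a \emph{trivial} homomorphism $G\to\Pic{\mathcal{P}}$, with $\FPdim(\mathcal{B})=|\mathcal{P}||G|^2$. For dimension $p^2q^2$ this leaves two options: either $|G|=pq$, forcing $\mathcal{P}=\Vec$ and $\mathcal{B}\cong\mathcal{Z}(\Vec_G^\omega)$, which is the twisted Drinfeld double (pointed when $G$ is abelian, and the claimed double when $G$ is non-abelian); or $|G|\in\{1,p,q\}$, where $(|G|,|\mathcal{P}|)=1$ gives $H^2(G,\Inv(\mathcal{P}))=0$, hence $\mathcal{B}\cong\mathcal{P}\boxtimes\mathcal{Z}(\Vec_G^\omega)$ with $G$ trivial or cyclic of prime order, so $\mathcal{B}$ is pointed. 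In that approach the identification with a double is built into the decomposition rather than being a matching problem at the end; salvaging your route would essentially amount to reproving this special case of \cite[Proposition 10]{CGPW} by hand.
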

\begin{proof}
By \cite[Proposition 10]{CGPW}, it is known that every group-theoretical modular category $\mathcal B$ can be obtained as a gauging of a pointed modular category $\mathcal{P}$ by the trivial homomorphisms $G \to \operatorname{Pic}(\mathcal{P})$, and the dimension of $\mathcal{B}$ is $|\mathcal{P}||G|^2$.

If $\mathcal{B}$ has dimension $p^2q^2$ where $p < q$, then $(|G|, |\mathcal{P}|) = 1$ so $H^2(G, A) = 0$, and then $\mathcal{B} \cong \mathcal{P} \boxtimes \mathcal{Z}(\Vec_G^\omega)$ as braided tensor categories. Now, if $\mathcal{P} \neq \Vec$, then the order of $G$ is trivial or prime, hence $G$ is either the trivial group or a cyclic group, and $\mathcal{Z}(\Vec_G^\omega)$ is pointed. Therefore, $\mathcal{B}$ is pointed.
\end{proof}

\newcommand{\etalchar}[1]{$^{#1}$}

\end{document}